\newtheorem{theorem}{Theorem}[section]
\newtheorem{proposition}[theorem]{Proposition}
\theoremstyle{definition}
\newcommand{\C}{\mathbb{C}}
\newcommand{\G}{\mathbb{G}}
\newcommand{\D}{\mathbb{D}}
\newcommand{\leafBid}{\mathscr{F}}
\newcommand{\autd}{Aut (\mathbb{D})}
\theoremstyle{remark}
\numberwithin{equation}{section}
\begin{document}

\title[A characterization of the bidisc]{A characterization of the bidisc by a subgroup of its automorphism group}

\author{Anindya Biswas}
\address{Department of Mathematics, Indian Institute of Science, Bangalore 560012, India.}
\email{anindyab@iisc.ac.in}
\thanks{}

\author{Anwoy Maitra}
\address{Harish-Chandra Research Institute, Chhatnag Road, Jhunsi, Prayagraj 211019, India.}
\email{anwoymaitra@hri.res.in (Corresponding author)}
\thanks{}

\subjclass[2010]{Primary 32M05, Secondary 32V15, 32V40}

\keywords{}

\date{}

\dedicatory{}

\begin{abstract}
	
		We make a connection between the structure of the bidisc and
	a distinguished subgroup of its automorphism group. The
	automorphism group of the bidisc, as we know,
	is of dimension six and acts transitively. We observe that it contains a subgroup that is isomorphic to the automorphism group of the open unit disc and this subgroup partitions
	the bidisc into a complex curve and a family of strongly pseudo-convex hypersurfaces that are non-spherical as 
	CR-manifolds. Our work reverses this process and shows that  any $2$-dimensional Kobayashi-hyperbolic manifold whose
	automorphism group (which is known, from the general theory,
	to be a Lie group) has
	a $3$-dimensional subgroup that is non-solvable (as a Lie
	group) and that acts on the manifold to produce a collection
	of orbits possessing essentially the characteristics of the
	concretely known collection of orbits mentioned above, is
	biholomorphic to the bidisc. 
	
	The distinguished subgroup is interesting in its own right. It turns out that if we consider any subdomain of the bidisc that is a union of a proper sub-collection of the collection of orbits mentioned above, then the automorphism group of this subdomain can be expressed very simply in terms of this
	distinguished subgroup.
	
\end{abstract}
	\keywords{Bidisc, automorphism group, classification of
	hyperbolic manifolds, subdomains of the bidisc}

\maketitle

\section{Introduction}
\label{intro}
This paper is concerned with presenting a characterization of the bidisc. Namely, we will
pick out the bidisc from among all 2-dimensional Kobayashi-hyperbolic complex manifolds
by making some assumptions about the automorphism group of such a manifold and about its
action on the manifold.

There has been work---see \cite{WongBid}---characterizing the bidisc by picking it out
from among all bounded
domains in $\mathbb{C}^2$ by making some extrinsic assumptions about the action of the
automorphism group of the domain on the domain (the assumptions are extrinsic in that
they deal with the convergence of certain sequences to boundary points). In our work, we
will only make intrinsic assumptions about the automorphism group and its action on the
manifold. Our work is heavily dependent on the seminal work \cite{Isaev23}, in which all
2-dimensional Kobayashi-hyperbolic complex manifolds with (real) 3-dimensional
automorphism group have been classified. The current work is also an outgrowth of the
earlier work \cite{BhatBisMai} of the two present authors (joint with T.~Bhattacharyya).

The bidisc (along with the Euclidean unit ball in $\C^2$) is a pre-eminent model domain
that has been studied intensively since the earliest days of several complex variables.
It is, therefore, of interest to have characterizations of it. Roughly speaking, our
main result says that any 2-dimensional Kobayashi-hyperbolic complex manifold whose
automorphism group (which is known to be a Lie group) is assumed to have a certain
non-solvable Lie \emph{subgroup} under the action of which all the orbits, except
only one, are assumed to be strongly pseudoconvex hypersurfaces, and
the sole remaining orbit is assumed to be a complex curve, must be
biholomorphic
to the bidisc if one makes the further assumption that these hypersurfaces are of a
specific type. To be more precise, we make the assumption that these hypersurfaces
are CR-equivalent to certain standard hypersurfaces that occur in E. Cartan's famous
classification of connected homogeneous strongly pseudoconvex 3-dimensional CR 
manifolds \cite{Cartan} (see also \cite{IsaevAnalogues}). Equivalently, we make the
assumption that the hypersurfaces are CR-equivalent to certain hypersurfaces (see 
(\ref{eq:leafBid}) and Proposition~\ref{CREquivalenceOfOrbits}) one
obtains
when a particular distinguished subgroup (see \eqref{G_D})
of $Aut(\mathbb{D}^2)$ acts on $\D^2$. Along the way, we also obtain characterizations of some
distinguished sub-domains of $\mathbb{D}^2$ (see Theorem~\ref{CharOfD^2_r}). 

A noteworthy feature of our work is that we only make assumptions
about some subgroup of the automorphism group of the
initially arbitrary manifold that we start out with. To the best
of our knowledge, this is a new approach. 

The plan of this work is as follows: in Section~\ref{PreliResult}, we
introduce the distinguished subgroup of $Aut(\mathbb{D}^2)$ and the distinguished
sub-domains of $\mathbb{D}^2$ mentioned above, and connect them to certain model
domains exhibited in \cite{Isaev23} and their automorphism groups. After that we investigate the action of some subgroups of 
$Aut(\mathbb{B}_2)$ on $\mathbb{B}_2$, where $\mathbb{B}_2$ denotes the Euclidean
unit ball in $\mathbb{C}^2$
(this is needed for the proof of our main theorem). Finally, in 
Section~\ref{MainResult}, we prove our main result characterizing
$\mathbb{D}^2$.      
\section{Preliminary results}\label{PreliResult}

\subsection{Some domains in $\D^2$ and their automorphism groups}\label{sec:dom_D^2_aut_grp}

We start with $\D^2$ and its automorphism group which is given by
\begin{align*}
Aut(\D^2)=\{(\varphi_1\times \varphi_2), (\varphi_1\times \varphi_2) \circ \sigma : \varphi_1, \varphi_2\in \autd \},
\end{align*}
where $(\varphi_1\times \varphi_2)(z_1,z_2)=(\varphi_1(z_1), \varphi_2(z_2))$ and $\sigma(z_1,z_2)=(z_2, z_1)$. This can be found in \cite{Rudin poly}. In \cite{BhatBisMai}, a closed subgroup of $Aut(\D^2)$ has been identified and its action on $\D^2$ has been observed. The subgroup is 
\begin{align}\label{G_D}
G_\D =\{\Phi_{\varphi}: \varphi\in \autd \},
\end{align}
where $\Phi_{\varphi}(z_1,z_2)=(\varphi(z_1),\varphi(z_2))$. This group does not act transitively and its orbits are 
\begin{align}\label{eq:leafBid}
\mathscr{F}_a =\Big\{(z_1,z_2)\in \D^2 :\Big|\frac{z_1 -z_2}{1-\overline{z_1}z_2}\Big|=a\Big\},\,\,a\in[0,1).
\end{align}
From \cite{BhatBisMai} we know that $\mathscr{F}_0$ is a complex curve and $\mathscr{F}_a$ is a strongly pseudoconvex hypersurface for each $a\in (0,1)$.

In \cite{Isaev_n_n2-1} and \cite{Isaev23}, the author listed some collections of domains which are given by
\begin{align}\label{D2st}
\mathcal{D}^{(2)}_{s,t}=&\{(z_1,z_2,z_3)\in \mathbb{C}^3:s<|z_1|^2 +|z_2|^2 -|z_3|^2<t,z_1 ^2 +z_2 ^2 -z_3^2=1,\\ \notag
& Im(z_2(\overline{z_1}+\overline{z_3}))>0\}, 1\leq s<t\leq\infty
\end{align}
and
\begin{align}\label{D2s}
\mathcal{D}^{(2)}_s =&\{(1:z_1:z_2:z_3)\in \mathbb{CP}^3:s<|z_1|^2 +|z_2|^2 -|z_3|^2,z_1 ^2 +z_2 ^2 -z_3^2=1,\\ \notag
&Im(z_2(\overline{z_1}+\overline{z_3}))>0\}
\cup\{(0:z_1:z_2:z_3)\in \mathbb{CP}^3:z_1 ^2 +z_2 ^2 -z_3^2=
0,\\ \notag
&Im(z_2(\overline{z_1}+\overline{z_3}))>0\}, 1<s.
\end{align}
Here $\mathcal{O}^{(2)} =\{(0:z_1:z_2:z_3)\in \mathbb{CP}^3:z_1 ^2 +z_2 ^2 -z_3^2=0,
Im(z_2(\overline{z_1}+\overline{z_3}))>0\}$ is a complex curve and we have $\mathcal{D}^{(2)}_s=\mathcal{D}^{(2)}_{s,\infty} \cup \mathcal{O}^{(2)}$.

The connected components containing the identity of the
automorphism groups of the domains (\ref{D2st}) and (\ref{D2s})
are 
all equal to $SO_{2,1}(\mathbb{R})^0$ (the connected component of
$SO_{2,1}(\mathbb{R})$ containing the 
identity---hereinafter, for $K$ an arbitrary topological
group, we shall use $K^0$ to denote the connected component
of $K$ containing the identity) with the following action 

\begin{align}\label{AutActionD^2_1,infty}
\begin{pmatrix}
z_1\\
z_2\\
z_3
\end{pmatrix}\mapsto A\begin{pmatrix}
z_1\\
z_2\\
z_3
\end{pmatrix}
\end{align}
and
\begin{align}\label{AutActionD^2_s}
\begin{pmatrix}
s\\
z_1\\
z_2\\
z_3
\end{pmatrix}\mapsto \begin{pmatrix}
1 &\mathbf{0}\\
\mathbf{0} &A
\end{pmatrix}\begin{pmatrix}
s\\
z_1\\
z_2\\
z_3
\end{pmatrix}, \; s=0,1,
\end{align}
respectively.
We write $\mathcal{R}^{(1)}$ to denote this group
($SO_{2,1}(\mathbb{R})^0$ with the action described above). The
$\mathcal{R}^{(1)}$-orbits of (\ref{D2st}) are given by 
\begin{align}\label{eta2alpha}
\eta^{(2)}_\alpha=\Bigg \{(z_1,z_2,z_3)\in \mathcal{D}^{(2)}_{s,t}:|z_1|^2 +|z_2|^2 -|z_3|^2=\sqrt{\frac{\alpha +1}{2}}\Bigg\},s<\alpha <t.
\end{align}
It is known from \cite{IsaevAnalogues} that the hypersurfaces $\eta^{(2)}_\alpha$ are strongly pseudoconvex,
non-spherical and pairwise CR-nonequivalent.
The $\mathcal{R}^{(1)}$-orbits of (\ref{D2s}) are $\eta^{(2)}_\alpha$, $\alpha>s$, together with the complex curve $\mathcal{O}^{(2)}$.

Now we shall connect these domains $\mathcal{D}^{(2)}_{s,t}$ and $\mathcal{D}^{(2)}_{s}$ with domains in $\D^2$. Consider the map $J:\D^2 \rightarrow\mathbb{CP}^3$ defined by
\begin{align}\label{Definition of J}
J(z,w)=(z-w:1-zw:i(1+zw):-i(z+w)).
\end{align}
In the article \cite{BhatBisMai}, the authors showed that $J$ is a biholomorphism from $\D^2$  onto $\mathcal{D}^{(2)} _1$. It maps $D=\{(z,z):z\in \D\}$ onto $\mathcal{O}^{(2)}$. From this map we derive
$H:\D^2 -D \rightarrow\mathbb{C}^3$ defined by
\begin{align}\label{DefnOfH}
H(z,w)=\Big(\frac{1-zw}{z-w},i \frac{1+zw}{z-w},-i \frac{z+w}{z-w}\Big).
\end{align}

\begin{proposition}\label{CREquivalenceOfOrbits}
	$H$ maps $\D^2 -D$ onto $\mathcal{D}^{(2)}_{1,\infty}$ biholomorphically. Moreover, for any $a\in(0,1)$, $H$ gives rise to a CR-isomorphism between $\mathscr{F}_a$ and $\eta^{(2)}_\alpha$ where $\alpha=\frac{8}{a^4} -\frac{8}{a^2}+1$.
\end{proposition}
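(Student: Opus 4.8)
The plan is to exploit the observation, recorded just above the statement, that $H$ is simply the affine expression of $J$ on the chart of $\mathbb{CP}^3$ where the first homogeneous coordinate does not vanish. On $\D^2-D$ the first homogeneous coordinate $z-w$ of $J(z,w)$ is nonzero, so $J(z,w)$ lies in the affine chart $U_0=\{(w_0:w_1:w_2:w_3):w_0\neq 0\}$, and composing $J$ with the dehomogenization $U_0\to\mathbb{C}^3$, $(w_0:w_1:w_2:w_3)\mapsto(w_1/w_0,w_2/w_0,w_3/w_0)$, produces exactly $H$. Since $J$ is a biholomorphism of $\D^2$ onto $\mathcal{D}^{(2)}_1$ carrying $D$ onto the part $\mathcal{O}^{(2)}$ lying at infinity, it restricts to a biholomorphism of $\D^2-D$ onto $\mathcal{D}^{(2)}_1\setminus\mathcal{O}^{(2)}=\mathcal{D}^{(2)}_{1,\infty}$. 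Under the dehomogenization (a biholomorphism onto its image identifying the affine points of $\mathcal{D}^{(2)}_1$ with $\mathcal{D}^{(2)}_{1,\infty}\subseteq\mathbb{C}^3$) this transports to the first assertion. As a sanity check one verifies directly that $z_1^2+z_2^2-z_3^2=1$ for $(z_1,z_2,z_3)=H(z,w)$: the numerator $(1-zw)^2-(1+zw)^2+(z+w)^2$ collapses to $(z-w)^2$.

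For the second assertion I would use that a biholomorphism of open complex manifolds restricts on any real hypersurface to a CR-isomorphism onto its image, so it suffices to prove $H(\mathscr{F}_a)=\eta^{(2)}_\alpha$ for the stated $\alpha$. Since $\eta^{(2)}_\alpha$ is the level set $\{|z_1|^2+|z_2|^2-|z_3|^2=\sqrt{(\alpha+1)/2}\}$ inside $\mathcal{D}^{(2)}_{1,\infty}$, the task reduces to evaluating $|z_1|^2+|z_2|^2-|z_3|^2$ along $H$. Writing $(z_1,z_2,z_3)=H(z,w)$ and applying the identity $|1-zw|^2+|1+zw|^2=2(1+|z|^2|w|^2)$ gives
\[
|z_1|^2+|z_2|^2-|z_3|^2=\frac{2(1+|z|^2|w|^2)-|z+w|^2}{|z-w|^2}.
\]
On $\mathscr{F}_a$ the defining quantity satisfies $|z-w|^2=a^2|1-\overline{z}w|^2$, and substituting $|1-\overline{z}w|^2=1-2\,\mathrm{Re}(z\overline{w})+|z|^2|w|^2$ one checks the algebraic identity $2(1+|z|^2|w|^2)-|z+w|^2=(2/a^2-1)|z-w|^2$, so the displayed ratio collapses to $2/a^2-1$.

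Solving $2/a^2-1=\sqrt{(\alpha+1)/2}$ for $\alpha$ yields precisely $\alpha=\frac{8}{a^4}-\frac{8}{a^2}+1$, whence $H(\mathscr{F}_a)\subseteq\eta^{(2)}_\alpha$. Equality follows from a foliation-matching argument: $H$ is a bijection of $\D^2-D$ onto $\mathcal{D}^{(2)}_{1,\infty}$, the leaves $\mathscr{F}_a$ ($a\in(0,1)$) partition the source and the leaves $\eta^{(2)}_\alpha$ ($\alpha>1$) partition the target, and the correspondence $a\mapsto 2/a^2-1$ is a bijection of $(0,1)$ onto $(1,\infty)$, matching the two foliations leaf to leaf. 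The only genuine computation is the algebraic simplification of the displayed ratio, which I expect to be cleanest when organized around the symmetric quantities $|z|^2+|w|^2$, $|z|^2|w|^2$, and $\mathrm{Re}(z\overline{w})$; everything else is formal, so this bookkeeping is the sole step demanding attention.
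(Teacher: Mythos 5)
Your proposal is correct and follows essentially the same route as the paper: the paper's own proof simply cites the facts that $J$ is a biholomorphism of $\D^2$ onto $\mathcal{D}^{(2)}_1$ carrying $D$ to $\mathcal{O}^{(2)}$ and that $H$ maps $\mathscr{F}_a$ onto $\eta^{(2)}_\alpha$, which is exactly what you establish. You merely supply the details the paper leaves implicit, and your key computation $|z_1|^2+|z_2|^2-|z_3|^2 = \bigl(2|1-\overline{z}w|^2-|z-w|^2\bigr)/|z-w|^2 = 2/a^2-1$ on $\mathscr{F}_a$ checks out and yields the stated $\alpha=\frac{8}{a^4}-\frac{8}{a^2}+1$.
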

\begin{proof}
	The proof that $H$ is a biholomorphism between $\D^2 -D$ and $\mathcal{D}^{(2)}_{1,\infty}$ follows from the observations that the $J$ above is a biholomorphism that maps $D=\{(z,z):z\in \D\}$ onto $\mathcal{O}^{(2)}$ and that $\mathcal{D}^{(2)}_1=\mathcal{D}^{(2)}_{1,\infty} \cup \mathcal{O}^{(2)}$. The remaining part of the proposition follows from the observation that $H$ maps $\mathscr{F}_a$ onto $\eta^{(2)}_\alpha$, where $\alpha$ and
	$a$ are related as described above.
\end{proof}
The maps $J$ and $H$ open up a way to finding 
distinguished subdomains of $\D^2$ and $\D^2 -D$.
For $r,s,t$ with $1\leq s<t\leq \infty$ and $1<r$, consider the domains $J^{-1}(\mathcal{D}^{(2)}_r)$ and $H^{-1}(\mathcal{D}^{(2)}_{s,t})$ in $\D^2$. From the definitions of $\mathcal{D}^{(2)}_r$ and $\mathcal{D}^{(2)}_{s,t}$, it is easy to see that

\begin{align*}
H^{-1}(\mathcal{D}^{(2)}_{s,t})=\Bigg\{(z_1,z_2)\in \D^2: \sqrt{\frac{2}{t+1}}<\Big|\frac{z_1 -z_2}{1-\overline{z_1}z_2}\Big|< \sqrt{\frac{2}{s+1}}\Bigg\}
\end{align*}
and 
\begin{align}\label{CopyOfD2_r}
J^{-1}(\mathcal{D}^{(2)}_r)=\Bigg\{(z_1,z_2)\in \D^2: \Big|\frac{z_1 -z_2}{1-\overline{z_1}z_2}\Big|< \sqrt{\frac{2}{r+1}}\Bigg\}.
\end{align}
So it is natural to introduce domains $\D^2 _r$ and $\D^2 _{s,t}$ defined by
\begin{align*}
\D^2 _r =\Big\{(z_1,z_2)\in\D^2 :\Big|\frac{z_1 -z_2}{1-\overline{z_1}z_2}\Big|<r \Big\}, r\in (0,1)
\end{align*}
and 
\begin{align*}
\D^2 _{s,t} =\Big\{(z_1,z_2)\in\D^2 :s<\Big|\frac{z_1 -z_2}{1-\overline{z_1}z_2}\Big|<t \Big\}, 0\leq s<t\leq 1.
\end{align*}
Note that $\D^2_1 =\D^2$, if we define $\D^2_1$ in a manner
similar to the above. 
Now we shall show that $Aut(\D ^2 _{r})$ and $Aut(\D ^2 _{s,t})$ can be expressed in terms of $G_\D$. Before going into that result, we want to focus on a few facts. 
\begin{enumerate}
	\item $H^{-1}\mathcal{R}^{(1)}H=G_\D$ and $H^{-1}(-I_3)H=\sigma$, where $\sigma (z_1,z_2)=(z_2,z_1)$.\\
	To see this, recall (look at the sentence immediately 
	following \eqref{AutActionD^2_s}) that $\mathcal{R}^{(1)}$ is isomorphic to
	$SO_{2,1}(\mathbb{R})^0$. 
	We know from \cite{Isaev23} that $\mathcal{R}^{(1)}=G(\mathcal{D}^{(2)}_{1,\infty})$ (the
	latter denotes the connected identity-component of $Aut(\mathcal{D}^{(2)}_{1,\infty})$---hereinafter, for $M$
	an arbitrary hyperbolic complex manifold, we shall use
	$G(M)$ to denote $Aut(M)^0$). Therefore $\mathcal{R}^{(1)}$ 
	is a 3-dimensional Lie
	group. Since $H$ is a biholomorphism between $\D^2 _{0,1}$ and $\mathcal{D}^{(2)}_{1,\infty}$, $G(\D^2 _{0,1})=H^{-1}\mathcal{R}^{(1)}H$ is also a 
	3-dimensional Lie group. Recall the subgroup $G_\D$ (see (\ref{G_D})) of 
	$\text{Aut}(\D^2)$. It is clearly a subgroup of $G(\D^2 _{0,1})$ and a
	3-dimensional Lie group in its own right. But we already know that
	$G(\D^2 _{0,1})$ is 3-dimensional. Therefore, 
	$G_{\D}$ is a connected 3-dimensional subgroup of the connected
	3-dimensional Lie group $H^{-1}\mathcal{R}^{(1)}H$ and,
	consequently,
	coincides with the latter, i.e., $H^{-1}\mathcal{R}^{(1)}H = 
	G_{\D}$ (an open connected subgroup $K$ of a Lie group $G$
	is equal to the connected component of $G$ containing the identity,
	i.e., $K=G^0$---see, for example, Lemma~7.12 on page $156$ of \cite{Lee Smooth}). The fact that $H^{-1}(-I_3)H=\sigma$ follows easily from the definition of $H$.\\
	
	\item $Aut_{CR} (\mathscr{F}_a)=\{\Phi, \Phi\circ \sigma :\Phi \in G_\D\}$ for all $a\in (0,1)$.\\
	This is a consequence of the fact that for any $\alpha\in (1,\infty)$, $Aut_{CR} (\eta^{(2)} _\alpha)$ is generated by $\mathcal{R}^{(1)}$ and $-I_3$ (see page $669$ of \cite{Isaev23}), and $\mathscr{F}_a =H^{-1}(\eta^{(2)} _\alpha)$ for suitable $a$ and $\alpha$.\\
	
	\item For every holomorphic automorphism $\Phi$ of
	$\D^2_r$ (resp., $\D^2_{s,t}$) and for every $a \in (0,r)$
	(resp., $(s,t)$), $\Phi$ maps $\mathscr{F}_a$ bijectively to
	itself and hence $\Phi|_{\mathscr{F}_a}$ is a CR-automorphism
	of $\mathscr{F}_a$.

\end{enumerate}

\begin{theorem}\label{Aut_Of_D2_st}
	For $r,s,t$ with $r<1$ and $0\leq s<t\leq 1$, $Aut(\D^2 _r)$ and $Aut(\D^2 _{s,t})$ are equal to $\{\Phi, \Phi\circ \sigma :\Phi \in G_\D\}$, where $\sigma (z_1,z_2)=(z_2,z_1)$.
\end{theorem}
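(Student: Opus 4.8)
The plan is to prove the asserted equality by establishing both inclusions, writing $\widehat{G}_\D \defeq \{\Phi,\ \Phi\circ\sigma : \Phi\in G_\D\}$ throughout. The inclusion $\widehat{G}_\D \subseteq Aut(\D^2_r)$ (and likewise for $\D^2_{s,t}$) is the routine direction: every $\Phi_\varphi\in G_\D$ leaves the pseudohyperbolic quantity $\bigl|\tfrac{z_1-z_2}{1-\overline{z_1}z_2}\bigr|$ invariant, since $\varphi\in\autd$ preserves the pseudohyperbolic distance on $\D$, and $\sigma$ leaves the same quantity invariant by the symmetry of the expression. Hence each element of $\widehat{G}_\D$ carries every leaf $\mathscr{F}_a$ onto itself and therefore preserves $\D^2_r$ and $\D^2_{s,t}$, which are unions of such leaves.

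For the reverse inclusion, fix $\Phi\in Aut(\D^2_r)$ (the argument for $\D^2_{s,t}$ is identical). By fact (3), $\Phi$ maps each leaf $\mathscr{F}_a$ onto itself and restricts there to an element of $Aut_{CR}(\mathscr{F}_a)$, which by fact (2) equals $\widehat{G}_\D$. Thus for each $a$ there is $g_a\in\widehat{G}_\D$ with $\Phi|_{\mathscr{F}_a}=g_a|_{\mathscr{F}_a}$, and this $g_a$ is unique: an element of $\widehat{G}_\D$ restricting to the identity on $\mathscr{F}_a$ would have its underlying $\varphi$ fix every point in the projection of $\mathscr{F}_a$ to a coordinate, which is all of $\D$, forcing $\varphi=\mathrm{id}$ (the $\sigma$-factor is excluded since $\sigma$ is fixed-point free off the diagonal). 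We thereby obtain a well-defined map $a\mapsto g_a$ into $\widehat{G}_\D$, and the whole game is to show it is constant.

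The subtlety here—and the step I expect to be the main obstacle—is ruling out a leaf-dependent switching between the two components $G_\D$ and $G_\D\circ\sigma$ of $\widehat{G}_\D$. I would detect the component by a continuous test. Both $(0,a)$ and $(0,-a)$ lie on $\mathscr{F}_a$; if $g_a=\Phi_{\varphi_a}\in G_\D$ then $\Phi(0,a)$ and $\Phi(0,-a)$ share their first coordinate $\varphi_a(0)$ while differing in the second, whereas if $g_a\in G_\D\circ\sigma$ the roles of the two coordinates are reversed. Since $\varphi_a(a)\neq\varphi_a(-a)$, exactly one of these alternatives holds at each $a$, and the two coincidence loci are closed, disjoint, and cover the connected interval; hence the component is constant. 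Composing $\Phi$ with $\sigma$ if necessary, we may therefore assume $g_a=\Phi_{\varphi_a}\in G_\D$ for every $a$, so that $\Phi(z_1,z_2)=(\varphi_a(z_1),\varphi_a(z_2))$ on each leaf.

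Finally I would run a rigidity argument across the leaves to force a single $\varphi$. Fix $c\in\D$ and restrict the holomorphic coordinate function $\Phi_1$ to the slice $\{z_1=c\}$; in the variable $w=\frac{c-z_2}{1-\overline{c}z_2}$ this slice becomes a disc (an annulus in the $\D^2_{s,t}$ case) and $\Phi_1(c,\cdot)$ becomes a holomorphic function $F(w)$. On the leaf $\mathscr{F}_a$ one has $|w|=a$ and $\Phi_1(c,z_2)=\varphi_a(c)$, so $F$ is constant on every circle $|w|=a$; a holomorphic function depending only on $|w|$ is constant, whence $\varphi_a(c)$ is independent of $a$. As $c\in\D$ was arbitrary, $\varphi_a=\varphi$ for a single $\varphi\in\autd$ (the analogous slicing of $\Phi_2$ yields the same $\varphi$), and therefore $\Phi=\Phi_\varphi\in G_\D$ on all of $\D^2_r$. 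Undoing the possible composition with $\sigma$ gives $\Phi\in\widehat{G}_\D$, completing the nontrivial inclusion and hence the proof.
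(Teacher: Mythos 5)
Your proof is correct, but it takes a genuinely different route from the paper's. The paper also begins by producing, for a \emph{single} fixed leaf $\mathscr{F}_a$, an element $\Phi_\varphi$ or $\Phi_\varphi\circ\sigma$ agreeing with $\Phi$ on that leaf, but then finishes in one stroke: since the coincidence set of two holomorphic maps on a $2$-dimensional manifold is an analytic subset, and a proper analytic subset has Hausdorff dimension at most $2$, agreement on the $3$-dimensional submanifold $\mathscr{F}_a$ forces agreement everywhere (citing Chirka). This makes all of your gluing across leaves unnecessary. Your argument instead establishes agreement leaf by leaf and then patches: the uniqueness of $g_a$, the closed-and-disjoint-cover argument showing the component ($G_\D$ versus $G_\D\circ\sigma$) cannot switch as $a$ varies over the connected parameter interval, and the one-variable slicing argument (a holomorphic function of $w$ constant on a circle $|w|=a$ is constant by the identity theorem) forcing $\varphi_a$ to be independent of $a$. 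All of these steps are sound; what your approach buys is that it avoids the several-variables uniqueness theorem entirely and uses only elementary one-variable complex analysis plus connectedness, at the cost of considerably more bookkeeping. Two small points: in the $\D^2_r$ case the leaves $\mathscr{F}_a$, $a\in(0,r)$, cover only $\D^2_r$ minus the diagonal, so you should add one line extending the identity $\Phi=\Phi_\varphi$ across the diagonal by continuity (or density); and you explicitly verify the easy inclusion $\{\Phi,\Phi\circ\sigma:\Phi\in G_\D\}\subseteq Aut(\D^2_r)$, which the paper takes for granted---a harmless but welcome addition.
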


\begin{proof}
	We start with an arbitrary automorphism $\Phi$ of $\D^2_{r}$
	(resp., $\D^2_{s,t}$). We fix $a \in (0,r)$ (resp., $(s,t)$)
	arbitrarily. By the three observations above,
	we know that there exists $\varphi \in
	Aut(\D)$ such that $\Phi|_{\leafBid_a}
	= \Phi_{\varphi}$ or $\Phi_{\varphi} \circ \sigma$.
	Now note that $\Phi_{\varphi}$ (or $\Phi_{\varphi} \circ
	\sigma$, as the case may be) is itself an automorphism of
	$\D^2_{r}$
	(resp., $\D^2_{s,t}$). At any rate, $\Phi$
	and $\Phi_{\varphi}$ (or $\Phi_{\varphi} \circ 
	\sigma$, as the case may be) are both holomorphic maps on
	$\D^2_{r}$
	(resp., $\D^2_{s,t}$) that agree on the subset $\leafBid_a$,
	which is a real 3-dimensional submanifold of $\D^2_{r}$
	(resp., $\D^2_{s,t}$). We know, from Corollary 2 of 
	Section~2.6 of Chapter~1 of \cite{Chirka}, that if the set of points of
	$\D^2_{r}$ (resp., $\D^2_{s,t}$) at which two
	holomorphic functions defined thereon agree (which is necessarily an
	analytic subset of $\D^2_{r}$ (resp., $\D^2_{s,t}$))
	has Hausdorff dimension greater than $2\times2-2=2$, then the two
	functions are identically equal. In our case, both the first and
	second component functions of $\Phi$ and $\Phi_{\varphi}$ (or
	$\Phi_{\varphi} \circ \sigma$, as the case may be) agree
	on the subset $\leafBid_a$ of $\D^2_{r}$ (resp.,
	$\D^2_{s,t}$), which has Hausdorff dimension 3, being a real
	3-dimensional submanifold of $\D^2_{r}$ (resp., 
	$\D^2_{s,t}$). Therefore, both the pairs are identically
	equal to one another, and this implies that
	$\Phi=\Phi_{\varphi}$ (or $\Phi_{\varphi} \circ
	\sigma$, as the case may be). This completes the proof.
	
\end{proof}

\textbf{Remark:} Another proof of Theorem \ref{Aut_Of_D2_st} can be given using the properties of the
symmetrized bidisc. We shall give a sketch of the proof.

\begin{enumerate}
	\item It suffices to show that if $\Phi\in Aut(\D^{2}_{s,t})$ (or $Aut(\D^{2}_{r})$) and $\Phi|_{\mathscr{F}_a} =id$ for some $a>0$, then $\Phi=id$. (The reason why this suffices is
	that, as we know, if we fix $a \in (s,t)$ (resp., $(0,r)$)
	arbitrarily, then there exists $\varphi \in
	Aut(\D)$ such that $\Phi|_{\leafBid_a}
	= \Phi_{\varphi}$ or $\Phi_{\varphi} \circ \sigma$. Therefore
	we can consider the automorphism $\Phi_{\varphi} \circ
	\Phi^{-1}$ (or $(\Phi_{\varphi}\circ\sigma)\circ\Phi^{-1}$,
	as the case may be) and attempt to conclude that it is equal
	to the identity.)
	\item Define $sym :\D^2 \rightarrow\G$ by $sym (z_1,z_2)=(z_1 +z_2,z_1z_2)$, $\G_r =sym (\D^2 _r)$ and $\G_{s,t}=sym (\D^2 _{s,t})$. Using the map in Theorem $3.2$ in \cite{BhatBisMai} and properties stated in \cite{Isaev23} (page 654--655), we see that $Aut(\G_{r})=Aut (\G_{s,t})=\{H_\varphi :\varphi \in \autd\}$ where $H_\varphi\circ sym (z_1,z_2)=sym (\varphi(z_1),\varphi(z_2))$.
	\item Define $\Psi_\Phi :\G_{s,t} \rightarrow\C^2$ by $\Psi_\Phi \circ sym (z_1,z_2)=sym \circ \Phi (z_1,z_2)$. It is easy to see that $\Psi_\Phi$ is well-defined and is a 
	holomorphic automorphism of $\G_{s,t}$ (respectively, of $\G_{r}$, when defined similarly). So $\Psi_\Phi=H_\varphi$ for some $\varphi\in \autd$. Using $\Phi|_{\mathscr{F}_a} =id$, we get $\varphi =id\in \autd$.
	\item So for any $(z_1,z_2)$, $\Phi (z_1,z_2)$ is either $(z_1,z_2)$ or $(z_2,z_1)$. A simple argument involving sequences shows that $\Phi=id$ in a neighbourhood of $(a,0)$ and this will imply the conclusion of Theorem \ref{Aut_Of_D2_st}.
\end{enumerate}

In our article, we shall use the domains $\D^2 _r$ to prove our main result. By \cite{Isaev23}, it is a hyperbolic 2-manifold with 3-dimensional automorphism group. Our next result is a characterization of $\D^2 _r$.
\begin{theorem}\label{CharOfD^2_r}
	Suppose $M$ is a hyperbolic 2-manifold with 3-dimensional automorphism group. Let $G(M) := Aut(M)^0$. Suppose that $M$ has an orbit under the action of $G(M)$ that is a complex curve, that 
	there exist an $a \in (0,1)$ and a 3-dimensional orbit $O$ of $M$ under $G(M)$ that is CR-equivalent to $\mathscr{F}_a$ and that all 
	remaining 
	orbits are strongly pseudoconvex real hypersurfaces. Then there exists an
	$r \in(0,1)$ such that $M$ is biholomorphic to $\D^{2}_{r}$.
\end{theorem}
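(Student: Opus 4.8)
The plan is to locate $M$ within the classification of \cite{Isaev23} and then read off the model from the prescribed orbit data. First I would convert the CR-type hypothesis on $O$ into structural information about $G(M)$. Since $O$ is a $3$-dimensional orbit of the $3$-dimensional group $G(M)$, the isotropy subgroup of a point of $O$ is discrete, so $G(M)$ acts transitively on $O$ with discrete stabilizers. Every element of $G(M)$ preserves the orbit $O$ and therefore restricts to a CR-automorphism of it, which gives a homomorphism $G(M)\to Aut_{CR}(O)$. An element in its kernel fixes the strongly pseudoconvex (hence generic) real hypersurface $O$ pointwise; as $O$ has real dimension $3>2$, the uniqueness argument used in the proof of Theorem~\ref{Aut_Of_D2_st} (two holomorphic maps agreeing on a set of Hausdorff dimension exceeding $2$ coincide) forces such an element to be the identity on the connected manifold $M$. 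Thus the homomorphism is injective. Because $O$ is CR-equivalent to $\mathscr{F}_a$, and hence to some $\eta^{(2)}_\alpha$, whose CR-automorphism group is generated by $\mathcal{R}^{(1)}\cong SO_{2,1}(\mathbb{R})^0$ and $-I_3$ (observation~(2) preceding Theorem~\ref{Aut_Of_D2_st}), the connected group $G(M)$ is carried isomorphically onto a connected $3$-dimensional subgroup of $Aut_{CR}(\eta^{(2)}_\alpha)$. The only connected $3$-dimensional subgroup of $Aut_{CR}(\eta^{(2)}_\alpha)$ is its identity component $\mathcal{R}^{(1)}$, so $G(M)\cong SO_{2,1}(\mathbb{R})^0$; in particular $G(M)$ is non-solvable.

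Next I would invoke the classification in \cite{Isaev23} of all $2$-dimensional Kobayashi-hyperbolic manifolds with $3$-dimensional automorphism group. For each model that work records both the structure of the identity component of the automorphism group and the decomposition of the model into orbits. The previous paragraph restricts attention to those models whose identity component is non-solvable, and the hypotheses demand in addition that the orbit decomposition consist of a single complex curve together with orbits that are all strongly pseudoconvex real hypersurfaces, at least one of which, namely $O$, is non-spherical. By E.~Cartan's classification of homogeneous strongly pseudoconvex $3$-dimensional CR manifolds \cite{Cartan}, each hypersurface orbit is either spherical or CR-equivalent to some $\eta^{(2)}_\alpha$; combined with the non-sphericity of $O$ this rules out the models whose hypersurface orbits are spherical. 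What survives is precisely the family $\mathcal{D}^{(2)}_s$, $s>1$, whose $\mathcal{R}^{(1)}$-orbits are exactly the hypersurfaces $\eta^{(2)}_\alpha$, $\alpha>s$, together with the single complex curve $\mathcal{O}^{(2)}$; the models with no complex-curve orbit (such as $\mathcal{D}^{(2)}_{s,t}$), with Levi-flat orbits, or with solvable automorphism group are all excluded. Hence $M$ is biholomorphic to $\mathcal{D}^{(2)}_s$ for some $s>1$.

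Finally I would transport this identification through the biholomorphism $J$ of \eqref{Definition of J}. By \eqref{CopyOfD2_r}, $J^{-1}(\mathcal{D}^{(2)}_s)=\D^2_r$ with $r=\sqrt{2/(s+1)}$, and since $s>1$ we have $r\in(0,1)$; composing the biholomorphism $M\to\mathcal{D}^{(2)}_s$ with $J^{-1}$ exhibits $M$ as biholomorphic to $\D^2_r$, which is the desired conclusion. I expect the main obstacle to be the central step of the second paragraph: extracting from \cite{Isaev23} the orbit data of every model carrying a non-solvable $3$-dimensional symmetry group and checking that the joint requirement of exactly one complex-curve orbit and of all remaining orbits being (non-spherical) strongly pseudoconvex hypersurfaces is met by the $\mathcal{D}^{(2)}_s$ family and by no other model on the list. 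Everything else is bookkeeping, since the classification supplies a biholomorphism $M\to\mathcal{D}^{(2)}_s$ directly and no equivariance is needed for the stated conclusion.
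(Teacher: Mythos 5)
Your overall route is essentially the paper's: both arguments locate $M$ in Isaev's classification (the case, treated in the proof of Theorem~5.1 of \cite{Isaev23}, of a manifold with a complex-curve orbit whose remaining orbits are strongly pseudoconvex hypersurfaces, one of them CR-equivalent to some $\eta^{(2)}_\alpha$), conclude that $M$ is biholomorphic to $\mathcal{D}^{(2)}_s$ for some $s>1$, and then transport this through $J$ using \eqref{CopyOfD2_r} to land in $\D^2_r$ with $r=\sqrt{2/(s+1)}$. Your first paragraph (pinning down $G(M)\cong SO_{2,1}(\mathbb{R})^0$ via the injection into $Aut_{CR}(O)$) is sound but is extra work the paper does not need, since the appeal to the proof of Isaev's Theorem~5.1 already filters by the orbit data.

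One step as written is wrong, though it is easily repaired: E.~Cartan's classification does \emph{not} say that every homogeneous strongly pseudoconvex non-spherical $3$-dimensional CR manifold is CR-equivalent to some $\eta^{(2)}_\alpha$ --- there are other non-spherical families in that classification (and correspondingly other models in Isaev's list with a complex-curve orbit and non-spherical hypersurface orbits), so non-sphericity of $O$ alone does not isolate the family $\mathcal{D}^{(2)}_s$. The repair is simply to use the hypothesis at full strength: $O$ is CR-equivalent to $\mathscr{F}_a$, hence (by Proposition~\ref{CREquivalenceOfOrbits}) to a specific $\eta^{(2)}_\alpha$, and it is this identification, together with the pairwise CR-nonequivalence of the various families of non-spherical orbits, that singles out $\mathcal{D}^{(2)}_s$ among the surviving models. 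With that substitution your argument matches the paper's.
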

\begin{proof}
	Our theorem follows, with very little effort, from Isaev's work.
	It follows from the proof of \cite[Theorem~5.1]{Isaev23} that if $M$ is a hyperbolic 2-manifold with 3-dimensional automorphism group having an orbit under the action of $G(M)$
	that is a complex curve, such that all its remaining orbits
	are strongly pseudoconvex (3-dimensional) real hypersurfaces and
	such that one of these 3-dimensional orbits is CR-equivalent to
	$\eta^{(2)}_{\alpha}$ for some $\alpha \in (1,\infty)$, then $M$ is biholomorphic to $\mathcal{D}^{(2)}_s$ for some $s \in (1,\infty)$. But we have already seen that $\eta^{(2)}_{\alpha}$ is
	CR-equivalent to $\mathscr{F}_a$, where
	\[
	\alpha=\frac{8}{a^4} -\frac{8}{a^2}+1,
	\]
	and $\mathcal{D}^{(2)}_s$ is biholomorphic with some $\D^{2}_{r}$. Therefore, we can use the observation that we stated above (see (\ref{CopyOfD2_r})) to conclude that $M$ is biholomorphic to $\D^{2}_{r}$ for some $r<1$. 
\end{proof}

Now suppose that, with $M$ as above, we have the following:

\begin{enumerate}
	\item Exactly one $G(M)$-orbit is a complex curve.
	\item The remaining orbits are strongly pseudoconvex real hypersurfaces.
	\item There exist a sequence $\{a_n\} \subset (0,1)$ converging to $1$, and, for each $n$, an orbit $\mathcal{O}_n$ in $M$ which is CR-equivalent to $\mathscr{F}_{a_n}$.
\end{enumerate}
Under these conditions, $M$ cannot be biholomorphic with $\D^2 _r$ for any $r\in(0,1)$. The reason behind this claim is the following: If $b_1, b_2\in (0,1)$ are two distinct numbers, then $\mathscr{F}_{b_1}$ and $\mathscr{F}_{b_2}$ are not CR-equivalent
(they are CR-equivalent to $\eta^{(2)}_{\beta_1}$ and 
$\eta^{(2)}_{\beta_2}$, respectively, where
\[
\beta_j = \frac{8}{b_j^4}-\frac{8}{b_j^2}+1, \; j=1,2;
\]
hence $\beta_1 \neq \beta_2$ and so, by the remark made 
immediately after \eqref{eta2alpha}, $\mathscr{F}_{b_1}$ and
$\mathscr{F}_{b_2}$ are CR-nonequivalent.) If $M$ is
biholomorphic with $\D^2 _r$ for some $r\in(0,1)$, then,
considering the action of $G_\D$ on $\D^2_r$ and the orbits of
$\D^2_r$ under $G_{\D}$, we are led to conclude that there exists
an orbit of $\D^2_r$ under $G_\D$ that is CR-equivalent to
some $\mathscr{F}_{a_n}$, where $a_n\in (r,1)$. But, taking into
consideration the CR-nonequivalence of the
$\mathscr{F}_{a}$'s, this is impossible.  

\subsection{Action of $SU_{1,1} (\mathbb{C})$ and $O_{2,1} (\mathbb{R})$ on the open unit ball}\label{ActionOnTheBall}

Let $\mathbb{B}_2$ denote the open Euclidean unit ball in $\mathbb{C}^2$. Its automorphism group has a connection with the Lie group $SU_{2,1} (\mathbb{C})$, which acts on $\mathbb{B}_2$ in the following way
$$\begin{pmatrix}
a_{1} &a_{2} &a_{3}\\
b_{1} &b_{2} &b_{3}\\
c_{1} &c_{2} &c_{3}
\end{pmatrix}\cdot(u,v)=\frac{(a_1 u + a_2 v + a_3,b_1 u +b_2 v +b_3)}{c_1 u +c_2 v + c_3}.$$
In this subsection we consider $O_{2,1} (\mathbb{R})$ and a copy of $SU_{1,1} (\mathbb{C})$ as subgroups of $SU_{2,1} (\mathbb{C})$ and study their actions on $\mathbb{B}_2$.

First we investigate the action of $SU_{1,1} (\mathbb{C})$. Consider
\[
\begin{pmatrix}
\alpha & \beta \\
\overline{\beta} & \overline{\alpha}
\end{pmatrix} \mapsto
\begin{pmatrix}
1 &0 &0\\
0 &\alpha &\beta\\
0 &\overline{\beta} &\overline{\alpha}
\end{pmatrix}
\]
where $|\alpha|^2 -|\beta|^2=1$. This is an embedding of $SU_{1,1} (\mathbb{C})$ in $SU_{2,1} (\mathbb{C})$. 
We shall identify $SU_{1,1} (\mathbb{C})$ with the subgroup
\begin{align*}
\Bigg\{\begin{pmatrix}
1 &0 &0\\
0 &\alpha &\beta\\
0 &\overline{\beta} &\overline{\alpha}
\end{pmatrix}: \alpha,\beta\in \mathbb{C},|\alpha|^2 -|\beta|^2=1 \Bigg\}
\end{align*}
of $SU_{2,1} (\mathbb{C})$ and observe its action on $\mathbb{B}_2$.

\begin{proposition}\label{ActionOfSU11}
	The $SU_{1,1} (\mathbb{C})$-orbits of $\mathbb{B}_2$ are given by $SU_{1,1} (\mathbb{C})\cdot (t,0),\,\,t\in [0,1)$. $SU_{1,1} (\mathbb{C})\cdot (0,0)$ is a complex curve and, for $t\in (0,1)$, $SU_{1,1} (\mathbb{C})\cdot (t,0)$ is CR-equivalent with the sphere, that is, the topological boundary of $\mathbb{B}_2$.
\end{proposition}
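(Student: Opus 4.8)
The plan is to exhibit an explicit real-valued invariant of the action, show that its level sets are exactly the orbits, and then identify those level sets geometrically. First I would write out the action of the embedded subgroup explicitly. For the image matrix one computes
$$\begin{pmatrix} 1 & 0 & 0 \\ 0 & \alpha & \beta \\ 0 & \overline{\beta} & \overline{\alpha} \end{pmatrix} \cdot (u,v) = \left( \frac{u}{\overline{\beta}v + \overline{\alpha}}, \; \frac{\alpha v + \beta}{\overline{\beta}v + \overline{\alpha}} \right),$$
so the second coordinate transforms by the usual Möbius action of $SU_{1,1}(\mathbb{C})$ on $\mathbb{D}$, while the first coordinate is merely divided by the automorphy factor $\overline{\beta}v + \overline{\alpha}$.

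Next comes the key computation. Using $|\alpha|^2 - |\beta|^2 = 1$, a direct expansion shows that the cross terms cancel and $|\overline{\beta}v + \overline{\alpha}|^2 - |\alpha v + \beta|^2 = (|\alpha|^2 - |\beta|^2)(1-|v|^2) = 1 - |v|^2$. Writing $(u',v')$ for the image point, this yields $1 - |v'|^2 = (1-|v|^2)/|\overline{\beta}v + \overline{\alpha}|^2$ and $|u'|^2 = |u|^2/|\overline{\beta}v + \overline{\alpha}|^2$, so that the quantity
$$q(u,v) := \frac{|u|^2}{1 - |v|^2}$$
is constant along orbits. Since $(u,v) \in \mathbb{B}_2$ exactly when $|u|^2 + |v|^2 < 1$, equivalently $q(u,v) < 1$, the function $q$ maps $\mathbb{B}_2$ onto $[0,1)$, and $q(t,0) = t^2$.

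I would then establish transitivity on the level sets, which identifies the orbits with the sets $\{q = t^2\}$. Given $(u,v)$ with $q(u,v) = t^2$, choosing $\alpha = 1/\sqrt{1-|v|^2}$ and $\beta = -v/\sqrt{1-|v|^2}$ (which satisfy the defining constraint) sends $v$ to $0$ and $u$ to $u/\sqrt{1-|v|^2}$, a point of modulus $t$. The residual stabilizer of $\{v=0\}$ is the diagonal subgroup $\mathrm{diag}(1,e^{i\theta},e^{-i\theta})$, acting on the first coordinate by $u \mapsto e^{i\theta}u$, so a suitable $\theta$ brings the modulus-$t$ point to $(t,0)$. Hence $SU_{1,1}(\mathbb{C}) \cdot (t,0) = \{q = t^2\}$, and as $t$ ranges over $[0,1)$ these exhaust $\mathbb{B}_2$. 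Finally, the orbit $\{q = 0\} = \{(0,v) : |v| < 1\}$ is the image of the holomorphic embedding $v \mapsto (0,v)$ of $\mathbb{D}$, hence a complex curve; and for $t \in (0,1)$ the orbit is $\{|u|^2/t^2 + |v|^2 = 1\}$, which the $\mathbb{C}$-linear biholomorphism $(u,v) \mapsto (u/t, v)$ of $\mathbb{C}^2$ carries onto the unit sphere $\{|w|^2 + |v|^2 = 1\} = \partial\mathbb{B}_2$, the restriction being the desired CR-isomorphism.

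I expect the only genuinely delicate point to be the invariance computation, namely verifying the cancellation of the cross terms so that $q$ is indeed constant on orbits; everything afterward (transitivity via the disc action together with the rotation subgroup, and the linear CR-equivalence with the sphere) is routine.
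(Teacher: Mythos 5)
Your proof is correct and follows essentially the same route as the paper: reduce the second coordinate to $0$ via the M\"obius action of $SU_{1,1}(\mathbb{C})$ on $\mathbb{D}$, identify the orbits as $\{|u|^2 + t^2|v|^2 = t^2\}$, and map them to $\partial\mathbb{B}_2$ by $(u,v)\mapsto(u/t,v)$. The only cosmetic difference is that you separate the orbits via the invariant $q(u,v)=|u|^2/(1-|v|^2)$, whereas the paper argues directly that no element of $SU_{1,1}(\mathbb{C})$ can carry $(t_1,0)$ to $(t_2,0)$ for $t_1\neq t_2$; both are fine.
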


\begin{proof}
	Let $(u,v)\in \mathbb{B}_2$. For 
	$$A=\begin{pmatrix}
	1 &0 &0\\
	0 &\alpha &\beta\\
	0 &\overline{\beta} &\overline{\alpha}
	\end{pmatrix},\,\,\text{we have}\,\,A\cdot(u,v)=\frac{(u,\alpha v +\beta)}{\overline{\beta}v +\overline{\alpha}}.$$
	Note that $v\in \D$ and $\frac{\alpha v +\beta}{\overline{\beta}v +\overline{\alpha}}$ is the action of $\begin{pmatrix}
	\alpha &\beta\\
	\overline{\beta} &\overline{\alpha}
	\end{pmatrix}$ on $v$. Since the action of $SU_{1,1} (\mathbb{C})$ (considered as $2\times 2$ matrices) is transitive on $\D$, a little calculation gives that with proper choice of $A$, we can find a $t\in [0,1)$ such that $A\cdot (u,v)=(t,0)$. If for a fixed $(u,v)$, there are two elements of $(0,1)$, say $t_1,t_2$, such that $(t_1,0)$ and
	$(t_2,0)$ both belong to the $SU_{1,1}(\C)$-orbit of
	$(u,v)$,	
	then that means that there exists an element of 
	$SU_{1,1}(\C)$ carrying $(t_1,0)$ to $(t_2,0)$, hence, from
	the above, that  
	there exist complex numbers $\alpha, \beta$ such that $\frac{(t_1, \beta)}{\overline{\alpha}}=(t_2 , 0)$ with $|\alpha|^2 -|\beta|^2=1$. This gives us that $\beta=0$. If $t_1 =0$, then $t_2 =0$. When at least one of $t_1, t_2$ is positive, the other one is also positive, we get $\alpha
	=1$ and we
	also get $t_1=t_2$. Thus the $t$ above is unique and consequently the $SU_{1,1} (\mathbb{C})$-orbits of $\mathbb{B}_2$ are $SU_{1,1} (\mathbb{C})\cdot (t,0),\,\,t\in [0,1)$.
	
	Now consider $SU_{1,1} (\mathbb{C})\cdot (0,0)=\{(0,\frac{\beta}{\overline{\alpha}}): \alpha,\beta\in \mathbb{C},|\alpha|^2 -|\beta|^2=1\}$. With a little effort we can show that the map $f:\D\rightarrow\mathbb{C}^2$ sending $z$ to $(0,z)$ maps $\D$ biholomorphically onto $SU_{1,1} (\mathbb{C})\cdot (0,0)$.
	
	Focusing on the remaining orbits, we find that for $t\in (0,1)$, $SU_{1,1} (\mathbb{C})\cdot (t,0)=\{(u,v)\in \mathbb{B}_2:|u|^2 + t^2 |v|^2=t^2\}$. Now it is easy to see that the biholomorphic map $g_t :\mathbb{C}^2\rightarrow \mathbb{C}^2$ sending $(z,w)$ to $(\frac{z}{t},w)$ maps $SU_{1,1} (\mathbb{C})\cdot (t,0)$ onto $S^3$, the topological boundary of $\mathbb{B}_2$. This completes the proof.
\end{proof}

Now let us focus on the action of $O_{2,1} (\mathbb{R})$. By $O_{2,1} (\mathbb{R})$, we mean the subgroup $\{A\in SU_{2,1} (\mathbb{C}): A \in M_3 (\mathbb{R}),
A^tI_{2,1}A=I_{2,1}\}$ of $SU_{2,1} (\mathbb{C})$. We only take a look at the $O_{2,1} (\mathbb{R})$-orbit of $(0,0)$.

\begin{proposition}\label{ActionOfO21}
	The $O_{2,1} (\mathbb{R})$-orbit of $(0,0)$ is totally real.
\end{proposition}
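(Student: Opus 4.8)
The plan is to compute the $O_{2,1}(\mathbb{R})$-orbit of the origin explicitly and show that its tangent space at the origin contains no complex line, which is the defining property of a totally real submanifold (a real submanifold $N \subset \mathbb{C}^2$ is totally real if $T_p N \cap i\, T_p N = \{0\}$ for each $p \in N$, equivalently if $\dim_{\mathbb{R}} T_p N \leq 2$ and $T_p N$ contains no complex line). Since the action of $O_{2,1}(\mathbb{R})$ is by restrictions of biholomorphisms of a neighbourhood of the orbit, the orbit is a real-analytic submanifold and its being totally real can be checked at a single point, say $(0,0)$, because the group acts transitively on the orbit by biholomorphisms, which preserve the totally-real property.

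First I would write down the orbit set-theoretically. An element of $O_{2,1}(\mathbb{R})$ is a real matrix $A = (a_{ij})$ satisfying $A^t I_{2,1} A = I_{2,1}$, and by the action formula its effect on $(0,0)$ is
\begin{align*}
A \cdot (0,0) = \Bigl(\frac{a_{13}}{c_3}, \frac{b_{13}}{c_3}\Bigr) = \Bigl(\frac{a_{13}}{a_{33}}, \frac{a_{23}}{a_{33}}\Bigr),
\end{align*}
where I have renamed the last column entries according to the matrix layout given before Proposition~\ref{ActionOfSU11}. Because $A$ is real, the third column $(a_{13}, a_{23}, a_{33})^t$ is a real vector, and the constraint $A^t I_{2,1} A = I_{2,1}$ forces it to satisfy $a_{13}^2 + a_{23}^2 - a_{33}^2 = -1$ (reading off the $(3,3)$ entry). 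Hence the orbit is contained in the set of points $(x,y) = (a_{13}/a_{33}, a_{23}/a_{33})$ with $x,y$ \emph{real}; that is, the orbit lies inside $\mathbb{R}^2 \cap \mathbb{B}_2$, the real points of the ball. I would then verify that the orbit is in fact an open subset of this totally real plane $\mathbb{R}^2 = \{(x,y) : x,y \in \mathbb{R}\}$ by checking that the map sending the admissible third columns to $(x,y)$ has full real rank $2$, which is a routine Jacobian computation using the hyperboloid constraint.

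The key observation is that the real $2$-plane $\{(x,y) \in \mathbb{C}^2 : \mathsf{Im}\, x = \mathsf{Im}\, y = 0\}$ is the standard totally real plane in $\mathbb{C}^2$: its tangent space at any point is spanned over $\mathbb{R}$ by $\partial_{x}, \partial_{y}$, and multiplication by $i$ sends these to $\partial_{\mathsf{Im}\,x}, \partial_{\mathsf{Im}\,y}$, which are independent of the original two, so $T_p N \cap i\, T_p N = \{0\}$. Since the orbit is an open subset of this plane, it inherits the totally-real property at every point. The main obstacle, and the step requiring the most care, is confirming that the orbit genuinely fills out a $2$-dimensional piece of $\mathbb{R}^2$ rather than collapsing to something lower-dimensional: one must check that as $A$ ranges over $O_{2,1}(\mathbb{R})$ the pair $(a_{13}/a_{33}, a_{23}/a_{33})$ sweeps out an open set, which follows from the transitivity-type computation already used in Proposition~\ref{ActionOfSU11} together with the fact that $O_{2,1}(\mathbb{R})$ acts with $2$-dimensional orbit (its dimension is $3$ and the stabilizer of $(0,0)$ is $1$-dimensional, being essentially $SO_2(\mathbb{R})$ rotations fixing the origin). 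Everything else is a direct substitution into the action formula, so I would keep those calculations brief.
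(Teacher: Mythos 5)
Your proposal is correct, and its first half coincides exactly with the paper's argument: since every $A\in O_{2,1}(\mathbb{R})$ is a real matrix, $A\cdot(0,0)=(a_{13}/a_{33},a_{23}/a_{33})$ has real coordinates, so the orbit sits inside the standard totally real slice $\{(u,v)\in\mathbb{B}_2:\mathrm{Im}(u)=\mathrm{Im}(v)=0\}$. Where you diverge is in certifying that the orbit is genuinely $2$-dimensional: you invoke a dimension count ($\dim O_{2,1}(\mathbb{R})=3$, stabilizer of the origin essentially $O_2(\mathbb{R})$, hence $1$-dimensional) to conclude the orbit is an open subset of the real plane, whereas the paper instead writes down, for each real point $(z,w)\neq(0,0)$ of the ball, an explicit matrix $B\in O_{2,1}(\mathbb{R})$ with $B\cdot(0,0)=(z,w)$, thereby proving the stronger statement that the orbit is the \emph{entire} real slice $\mathbb{R}^2\cap\mathbb{B}_2$. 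Both routes suffice for the proposition and for its later use in Case~(iii) of the main theorem (what is needed there is a totally real $2$-dimensional orbit); your stabilizer computation is sound, and the only loose end is the deferred Jacobian/openness verification, which the paper's explicit construction renders unnecessary.
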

\begin{proof}
	If $A=\begin{pmatrix}
	a_{1} &a_{2} &a_{3}\\
	b_{1} &b_{2} &b_{3}\\
	c_{1} &c_{2} &c_{3}
	\end{pmatrix}\in O_{2,1}(\mathbb{R})$, then $A\cdot (0,0)=(\frac{a_3}{c_3},\frac{b_3}{c_3})$. So $O_{2,1} (\mathbb{R})\cdot (0,0)\subset \{(u,v)\in \mathbb{B}_2: Im(u)=0=Im(v)\}$. Suppose $(z,w)\in \mathbb{B}_2$ with $Im(z)=0=Im(w)$. If $(z,w)=(0,0)$, then we have $(z,w)=I_3 \cdot (0,0)$. If $(z,w)\neq (0,0)$, then consider the matrix
	
	\begin{align*}
	B=\begin{pmatrix}
	-\alpha w & k z &\gamma z\\
	\alpha z & k w & \gamma w\\
	0 & k (z^2 + w^2) & \gamma
	\end{pmatrix}
	\end{align*}
	where $\alpha =\frac{1}{\sqrt{z^2 + w^2}}, k=\frac{1}{\sqrt{(z^2 + w^2)(1-z^2 - w^2)}}$ and $\gamma =\frac{1}{\sqrt{1-z^2 - w^2}}$. It is easy to see that $B\in O_{2,1} (\mathbb{R})$ and $B\cdot (0,0)=(z,w)$. Hence we get that $O_{2,1} (\mathbb{R})\cdot (0,0)= \{(u,v)\in \mathbb{B}_2: Im(u)=0=Im(v)\}$. This proves the proposition.
\end{proof}

\section{Main result}\label{MainResult}

Before stating our main result, we want to remind the reader that given a 2-dimensional hyperbolic manifold $M$ with a 3-dimensional subgroup $G$ of $Aut(M)^0$, there are only a few possibilities for $dim_\mathbb{R} Aut(M)^0$. Obviously, $dim_\mathbb{R} Aut(M)^0 \geq
3$. It is a standard result (see, for example, Section~1.2 of 
\cite{IsaevLecAutGrps} and, in particular, equation (1.3) therein)
that for an $n$-dimensional hyperbolic complex manifold $M$, 
$dim_{\mathbb{R}}Aut(M) \leq n^2+2n$. In our case, this gives us: 
$dim_{\mathbb{R}}Aut(M) \leq 8$. Furthermore, we may conclude, using
Corollary~2.5 of \cite{IsaevLecAutGrps}, that $dim_{\mathbb{R}}Aut(M)
\neq 5$ (see also Theorem~2.2 therein). Finally, we may conclude, using Theorem~1.3 of
\cite{IsaevLecAutGrps}, that $dim_{\mathbb{R}}Aut(M) \neq 7$. This
shows that $dim_{\mathbb{R}}Aut(M)$, or, equivalently, 
$dim_{\mathbb{R}}Aut(M)^0$ is one of $3,4,6$ and $8$. When $dim_\mathbb{R} Aut(M)^0=8$, $M$ is biholomorphic with $\mathbb{B}_2$ 
(see, for example, Theorem~1.1 of \cite{IsaevLecAutGrps})
and when $dim_\mathbb{R} Aut(M)^0=6$, $M$ is biholomorphic with $\D^2$ (see, for example, Corollary~2.5 of \cite{IsaevLecAutGrps}). The cases of $3$ and $4$ are more complicated and can be found in \cite{Isaev_n_n2} and \cite{Isaev23}. We shall show that under certain conditions, found by observing the action of $G_\D$ on $\D^2$, the only possibility is $dim_\mathbb{R} Aut(M)^0=6$.

\begin{theorem}\label{CharacterizationOfD2}
	Let $M$ be a hyperbolic 2-manifold with a 3-dimensional non-solvable subgroup $G$ of $G(M)=Aut(M)^0$. Suppose that,
	under the action of $G$, the following conditions are met.
	
	\begin{enumerate}
		\item Exactly one $G$-orbit is a complex curve.
		\item The remaining $G$-orbits are strongly pseudoconvex real hypersurfaces.
		\item There exist a sequence $\{a_n\} \subset (0,1)$ converging to $1$ and, for each $n$, a $G$-orbit $\mathcal{O}_n$ in $M$ which is CR-equivalent to $\mathscr{F}_{a_n}$.
	\end{enumerate}
	Then $M$ is biholomorphic with the bidisc $\D^2$.
\end{theorem}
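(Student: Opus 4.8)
\emph{Strategy.} The plan is to show that the only admissible value of $\dim_{\mathbb{R}} Aut(M)^0$ is $6$; the conclusion $M \cong \D^2$ then follows from the facts recalled immediately before the theorem. By that same discussion we already know $\dim_{\mathbb{R}} Aut(M)^0 \in \{3,4,6,8\}$, so it suffices to exclude $3$, $4$ and $8$. We may assume $G$ is connected (passing to its identity component changes neither the hypothesis of non-solvability, which is a Lie-algebra condition, nor the relevant orbit structure). First I record that $G$ must be non-compact: each orbit $\mathcal{O}_n$ of condition~(3) is CR-equivalent to the non-compact hypersurface $\mathscr{F}_{a_n}$, hence non-compact, and a compact group cannot act transitively on a non-compact manifold. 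Since a $3$-dimensional non-solvable Lie algebra is simple, equal to either $\mathfrak{su}(2)$ (compact) or $\mathfrak{sl}_2(\mathbb{R})$ (non-compact), the Lie algebra of $G$ is $\mathfrak{sl}_2(\mathbb{R})$.

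\emph{The case $\dim = 8$.} Here $M \cong \mathbb{B}_2$, and $G$ lifts to a subgroup of $SU_{2,1}(\mathbb{C})$ with Lie algebra $\mathfrak{sl}_2(\mathbb{R})$. Decomposing the defining $3$-dimensional representation and tracking the signature-$(2,1)$ Hermitian form, one checks that up to conjugacy there are exactly two such subgroups: the $SU_{1,1}(\mathbb{C})$ of Section~\ref{ActionOnTheBall} (the reducible $2\oplus 1$ embedding) and the $SO_{2,1}(\mathbb{R})^0$ of that section (the irreducible embedding). By Proposition~\ref{ActionOfO21} the latter has a totally real $2$-dimensional orbit and no complex-curve orbit, violating condition~(1); hence $G$ is conjugate to $SU_{1,1}(\mathbb{C})$. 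But then, by Proposition~\ref{ActionOfSU11}, every hypersurface orbit is CR-equivalent to the sphere and is therefore spherical, whereas the orbits $\mathcal{O}_n$ of condition~(3) are CR-equivalent to the non-spherical $\mathscr{F}_{a_n}$. This contradiction rules out $\dim = 8$.

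\emph{The case $\dim = 3$.} Here $G \subseteq Aut(M)^0$, both connected of dimension $3$, so $G = Aut(M)^0 = G(M)$ and the $G$-orbits are exactly the $G(M)$-orbits. Any $a_n$ from condition~(3) then supplies a $3$-dimensional orbit CR-equivalent to $\mathscr{F}_{a_n}$, so Theorem~\ref{CharOfD^2_r} applies and yields $M \cong \D^2_r$ for some $r \in (0,1)$. Transporting the action of $Aut(M)^0 = G$ through this biholomorphism and using Theorem~\ref{Aut_Of_D2_st}, it becomes the action of $G_\D$, whose orbits are the $\mathscr{F}_a$ with $a \in [0,r)$. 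An orbit CR-equivalent to $\mathscr{F}_{a_n}$ must then be $\mathscr{F}_{a_n}$ itself, since distinct $\mathscr{F}_a$ are pairwise CR-inequivalent, forcing $a_n < r$ for every $n$ and contradicting $a_n \to 1$. This is precisely the obstruction already noted after Theorem~\ref{CharOfD^2_r}, and it rules out $\dim = 3$.

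\emph{The case $\dim = 4$.} This is the most delicate case, and I expect it (together with the conjugacy classification above) to require the most care. A $4$-dimensional Lie algebra containing the simple $\mathfrak{sl}_2(\mathbb{R})$ has Levi decomposition $\mathfrak{sl}_2(\mathbb{R}) \oplus \mathbb{R}$ with $\mathbb{R}$ a central ideal, since a semisimple algebra has no nontrivial $1$-dimensional representation; thus $Aut(M)^0$ contains a nontrivial central $1$-parameter subgroup $Z$ commuting with $G$. Fix the orbit $\mathcal{O}_1$, which is CR-equivalent to $\mathscr{F}_{a_1}$. Because $Aut(M)$ acts properly on the hyperbolic manifold $M$, any open $Aut(M)^0$-orbit would be clopen, hence all of $M$; but transitivity of $Aut(M)^0$ would force every $G$-orbit to be $3$-dimensional, contradicting the complex-curve orbit of~(1). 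Hence the $Aut(M)^0$-orbit through a point of $\mathcal{O}_1$ is the $3$-dimensional $G$-orbit $\mathcal{O}_1$ itself, and in particular $Z$ preserves $\mathcal{O}_1$. Now $G$ acts transitively on $\mathcal{O}_1$, so $G|_{\mathcal{O}_1} = Aut_{CR}(\mathcal{O}_1)^0$, which by observation~(2) preceding Theorem~\ref{Aut_Of_D2_st} is isomorphic to $G_\D \cong Aut(\D)^0 \cong PSL_2(\mathbb{R})$ and so has trivial centre. As $Z|_{\mathcal{O}_1}$ is a connected subgroup of $Aut_{CR}(\mathcal{O}_1)^0$ centralizing all of it, $Z|_{\mathcal{O}_1} = \{\mathrm{id}\}$, i.e.\ every $z \in Z$ fixes the real hypersurface $\mathcal{O}_1$ pointwise. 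The fixed-point set of such a $z$ is a complex-analytic subset of $M$; a proper one would have real dimension at most $2$, so it cannot contain the $3$-dimensional $\mathcal{O}_1$, and therefore equals $M$, giving $z = \mathrm{id}$. Thus $Z$ is trivial, contradicting $\dim = 4$. Having excluded $3$, $4$ and $8$, we conclude $\dim_{\mathbb{R}} Aut(M)^0 = 6$, and hence $M \cong \D^2$.
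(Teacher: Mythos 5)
Your overall strategy coincides with the paper's: reduce to $\dim_{\mathbb{R}}Aut(M)^0\in\{3,4,6,8\}$ and eliminate $3$, $4$ and $8$. Your treatment of the case $\dim=3$ is the paper's argument verbatim (Theorem~\ref{CharOfD^2_r} plus the CR-inequivalence of the $\mathscr{F}_a$'s). In the case $\dim=8$ you reach the same two contradictions as the paper (spherical orbits of $SU_{1,1}(\mathbb{C})$ versus the non-spherical $\mathscr{F}_{a_n}$; the totally real orbit of $O_{2,1}(\mathbb{R})$), but you replace the paper's citation of the Patera--Winternitz--Zassenhaus table of conjugacy classes of subgroups of $SU_{2,1}(\mathbb{C})$ by an asserted representation-theoretic classification (``one checks that up to conjugacy there are exactly two such subgroups''). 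That claim is true, and your preliminary reduction to $\mathfrak{sl}_2(\mathbb{R})$ via non-compactness of the orbits is a clean way to dispose of $\mathfrak{su}(2)$, but as written this is the one substantive input you neither prove nor cite; either supply the argument (complete reducibility of the restriction of the defining representation, the signature bookkeeping for the invariant Hermitian form) or cite \cite{PWZ} as the paper does.

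Where you genuinely diverge is the case $\dim=4$. The paper quotes Isaev's structure theory for hyperbolic $n$-manifolds with $n^2$-dimensional automorphism group (non-homogeneity, and the fact that every $G(M)$-orbit is a complex curve or a Levi-flat/spherical hypersurface), and derives a contradiction with strong pseudoconvexity and non-sphericity of $\mathscr{F}_{a}$. You instead give a self-contained Lie-theoretic argument: the radical of the $4$-dimensional algebra is a central line, the resulting one-parameter group $Z$ must act trivially on a hypersurface orbit because $Aut_{CR}(\mathscr{F}_a)^0\cong G_\D$ is centerless, and a nontrivial automorphism cannot fix a real $3$-dimensional set pointwise. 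This is correct and arguably more elementary, since it avoids \cite{Isaev_n_n2} entirely; the paper's route buys brevity at the cost of an external classification result. Two steps in your version do need to be said out loud: the claims that transitivity of $Aut(M)^0$ would force all $G$-orbits to be $3$-dimensional, and that the $Aut(M)^0$-orbit through a point of $\mathcal{O}_1$ equals $\mathcal{O}_1$, both rest on the fact that $Aut(M)^0=GZ$ with $Z$ central, so that $Aut(M)^0$ permutes the $G$-orbits and each $Aut(M)^0$-orbit is a disjoint union of equidimensional (hence relatively open) $G$-orbits. As stated these two assertions are non sequiturs; with that one observation added, the argument closes.
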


\begin{proof}
	All we have to do is to discard the cases $dim_\mathbb{R}
	G(M)=3,4,8$. Once this is done, it will follow that $M$ is
	biholomorphic with $\mathbb{D}^2$, 
	because, as mentioned above, it is known that the bidisc is,
	up to biholomorphic equivalence, the unique 2-dimensional
	hyperbolic complex manifold with 6-dimensional automorphism
	group.\\

	\textit{Case (i):} $dim_\mathbb{R} G(M)\neq 3$.\\
	Assume, to get a contradiction, that $dim_\mathbb{R} G(M)=
	3$. Then using Theorem \ref{CharOfD^2_r}, we conclude that
	$M$ is biholomorphic with $\D^2 _r$ for some $r\in (0,1)$.
	But the discussion following
	the proof of Theorem \ref{CharOfD^2_r} forbids this
	possibility. So $dim_\mathbb{R} G(M)\neq 3$.\\

	\textit{Case (ii):} $dim_\mathbb{R} G(M)\neq 4$.\\
	Assume, to get a contradiction, that $dim_\mathbb{R} G(M)=4$. 
	Then $M$ cannot be homogeneous, otherwise by Theorem~$1.1$ in \cite{Isaev_n_n2}, $M$ must be of dimension $3$ or $4$. By Proposition 2.1. in \cite{Isaev_n_n2}, each $G(M)$-orbit is either a real hypersurface or a complex curve. Clearly our manifold $M$ contains at most one complex curve and some real 3-dimensional hypersurfaces as $G(M)$-orbits. The $G(M)$-orbits have a connection with the $G$-orbits. If $O_{G(M)} (p)$ and $O_G (p)$ denote the $G(M)$-orbit and the $G$-orbit of $p\in M$, respectively, then for any $q\in O_{G(M)} (p)$, we must have $O_G (q)\subset O_{G(M)} (p)$. Also $q\in O_G (q)$ for all $q\in M$. Consequently $O_{G(M)} (p)=\bigcup_{q\in O_{G(M)} (p)} O_G (q)$ for all $p\in M$. This implies, in particular, that there is a 3-dimensional $G(M)$-orbit
	$\mathcal{O}$ that contains a
	$G$-orbit $\widetilde{\mathcal{O}}$
	CR-equivalent to some $\mathscr{F}_{a}$. Since both 
	$\mathcal{O}$ and $\widetilde{\mathcal{O}}$ are 3-dimensional,
	it follows that $\widetilde{\mathcal{O}}$ is an open subset
	of $\mathcal{O}$. 
	Now by Proposition 2.1. in \cite{Isaev_n_n2}, $\mathcal{O}$ is either Levi flat or spherical while each $\mathscr{F}_{a_m}$ is a 3-dimensional hypersurface which is strongly pseudoconvex and non-spherical (recall, from
	Proposition~\ref{CREquivalenceOfOrbits}, that 
	$\mathscr{F}_{a_m}$ is CR-isomorphic to 
	$\eta^{(2)}_{\alpha}$ for some $\alpha$, and 
	$\eta^{(2)}_{\alpha}$ is strongly pseudoconvex and 
	non-spherical---see \cite{Cartan}, \cite{IsaevAnalogues},
	\cite{Isaev23}). 
	In particular,
	$\mathcal{O}$, which is either Levi-flat or spherical, has a
	non-empty open subset, $\widetilde{\mathcal{O}}$, which is
	strongly pseudoconvex and non-spherical.
	This is a contradiction. Hence $dim_\mathbb{R} G(M)\neq 4$.\\

	\textit{Case (iii):} $dim_\mathbb{R} G(M)\neq 8$.\\
	Assume, to get a contradiction, that $\dim_{\mathbb{R}} G(M)
	= 8$. Then, by Theorem~1.1 of \cite{IsaevLecAutGrps}, $M$ is
	biholomorphic to $\mathbb{B}_2$. Therefore it follows from the
	hypotheses that 
	$G(\mathbb{B}_2)$ has a certain non-solvable 3-dimensional 
	Lie subgroup $G$ that satisfies the following properties: (1)
	the
	orbits of $\mathbb{B}_2$ under 
	$G$, except only one, are strongly pseudoconvex 3-dimensional
	real hypersurfaces and the sole remaining orbit is a complex
	curve, and (2)
	there exist an $a \in (0,1)$ and a 3-dimensional orbit of
	$\mathbb{B}_2$ under $G$ that is CR-equivalent to $\mathscr{F}_a$. 
	Now the 
	automorphism group of $\mathbb{B}_2$ is known (see Proposition~3,
	Section~7, Chapter~2 of \cite{Akhiezer}) to be 
	\[
	PSU_{2,1}(\mathbb{C}) =SU_{2,1}(\mathbb{C}) / K,
	\]
	where $K = \{ \lambda I_3 \mid \lambda^3 = 1 \}$.
	The Lie algebra of $G$
	is, by our assumption, non-solvable. 
	Now the conjugacy 
	classes of all the Lie subgroups of $SU_{2,1}(\mathbb{C})$ have been 
	determined in \cite{PWZ} and listed in
	Table~II on page 1390 of 
	\cite{PWZ}. Our group $G$ is (assumed to be isomorphic to)
	a 
	Lie subgroup of $SU_{2,1}(\mathbb{C})/K$ and must, therefore, be 
	of the
	form $N/K$, where $N$ is some Lie subgroup of $SU_{2,1}(\mathbb{C})$. Thus
	$N$ can be identified
	with a group of matrices that is a subgroup of $SU_{2,1}(\mathbb{C})$.
	Note that $SU_{2,1}(\mathbb{C})$ itself acts on $\mathbb{B}_2$,
	as we have discussed in subsection~\ref{ActionOnTheBall}.
	Note further that $N$ is a subgroup of $SU_{2,1}(\mathbb{C})$ containing
	$K$,
	and that the orbits of $\mathbb{B}_2$ under $N$ are precisely the
	orbits of $\mathbb{B}_2$ under $N/K$ (this is just a reflection of
	the general fact that if we take any two matrices $A$ and $B$
	in
	$SU_{2,1}(\mathbb{C})$ such that there exists $\lambda \in \C$ such that $B
	= 
	\lambda A$, then ($\lambda^3=1$) and for every $\underline{z} \in
	\mathbb{B}_2$, $A \cdot \underline{z} = B \cdot \underline{z}$, where $\cdot$ denotes the action of 
	$SU_{2,1}(\mathbb{C})$ on $\mathbb{B}_2$ that was
	discussed in subsection~\ref{ActionOnTheBall}). Now $N$ must
	belong to some conjugacy class appearing in
	the aforementioned table. Since $G$ is isomorphic to $N/K$,
	and $G$ is assumed to be non-solvable, $N$ must 
	be non-solvable as well. Therefore, from the table, we see
	that $N$ must belong either to the conjugacy class of
	$SU_{1,1}(\mathbb{C})$ or to that of $SU_2 (\mathbb{C})$ or to that of $O_{2,1}(\mathbb{R})$ (that is, isomorphic to some embedded copy as we have discussed in subsection~\ref{ActionOnTheBall}).
	It
	cannot belong to the conjugacy class of $SU_2 (\mathbb{C})$, because,
	if it did, then $G$ would be compact, which cannot be the
	case (by assumption, it acts on $\mathbb{B}_2$
	to produce non-compact orbits\,---\,$\mathscr{F}_a$ is non-compact).
	Consequently, $N$ must belong either to the conjugacy class of
	$SU_{1,1}(\mathbb{C})$ or to that of $O_{2,1}(\mathbb{R})$. 
	
	Suppose first that $N$ belongs
	to the conjugacy class of $SU_{1,1}(\mathbb{C})$. Consider the
	orbits of $\mathbb{B}_2$ under $N$. It has been shown
	that the 3-dimensional orbits of $\mathbb{B}_2$ under $SU_{1,1}(\mathbb{C})$ are
	spherical hypersurfaces (Proposition \ref{ActionOfSU11}). Our assumption implies that the orbits of
	$\mathbb{B}_2$ under $N/K$ are,
	with one exception, strongly pseudoconvex hypersurfaces, that
	the sole remaining orbit is a complex curve, and that there
	exist
	$a \in (0,1)$ and a hypersurface orbit that is CR-equivalent
	to $\mathscr{F}_a$. By the observation about the equality of the
	orbits of $\mathbb{B}_2$ under $N$ and $N/K$, it follows that there
	exists an orbit $\mathcal{O}$
	of $\mathbb{B}_2$ under $N$ that is CR-equivalent
	to $\mathscr{F}_{a}$ for some $a \in (0,1)$.
	Now, since $N$ belongs to the
	conjugacy class of $SU_{1,1}(\mathbb{C})$, it follows that $\mathcal{O}$ is the image
	of a spherical hypersurface in $\mathbb{B}_2$ under a holomorphic
	automorphism of $\mathbb{B}_2$ and hence is spherical itself.
	But that is a contradiction, because $\mathcal{O}$ is
	CR-equivalent to $\mathscr{F}_a$, which is not spherical.
	
	Suppose now that $N$ belongs to the conjugacy class of 
	$O_{2,1}(\mathbb{R})$. It has been shown that the orbit
	of $(0,0)$ under $O_{2,1}(\mathbb{R})$ is a totally real 2-dimensional
	submanifold of $\mathbb{B}_2$ (Proposition \ref{ActionOfO21}). Consequently, among the orbits
	of $\mathbb{B}_2$ under $N$, there must exist a totally real
	2-dimensional submanifold. But there does not, because, as 
	above, it follows from our assumption about the orbits of
	$\mathbb{B}_2$ under $N/K$ that the only 2-dimensional orbit 
	of $\mathbb{B}_2$ under $N$ is a complex curve.
	
	Therefore, we invariably get a contradiction. This shows
	that $\dim_{\mathbb{R}} G(M) \neq 8$.
	
	Thus, $dim_\mathbb{R} G(M)=6$, and this completes the proof.

\end{proof}

\vspace{0.1in} \noindent\textbf{Acknowledgements:}
The first-named author's research is supported by the University Grants Commission Centre for Advanced Studies. 
The second-named author's research is supported by a postdoctoral fellowship from the Harish-Chandra Research
Institute (Homi Bhabha National Institute).

\end{document}